%
%
%
%
\documentclass[english]{article}
\usepackage{amsfonts}
\usepackage{amsmath}
\usepackage{amssymb}
\usepackage{amscd}
\usepackage{amsthm}
\allowdisplaybreaks
\textheight220mm \voffset-0mm
\textwidth160mm \hoffset-19mm
\setlength{\topmargin}{0mm}
\footskip15mm
\newtheorem{theorem}{Theorem}[section]
\newtheorem{proposition}{Proposition}[section]

\newtheorem{lemma} {Lemma}[section]

\newtheorem{remark}{Remark}[section]

\newtheorem{definition} {{Definition}}[section]

\def\R{{\mathbb R}}
\newcommand{\be} {\begin{equation}}
\newcommand{\ee} {\end{equation}}
\newcommand{\bea} {\begin{eqnarray}}
\newcommand{\eea} {\end{eqnarray}}
\newcommand{\Bea} {\begin{eqnarray*}}
\newcommand{\Eea} {\end{eqnarray*}}

\newcommand{\essinf}{\mathop{\rm {ess\,inf}}\limits}

\begin{document}

\title{\bf \sc On weak solutions for fourth-order problems involving the Leray-Lions type operators}
\date{}
\maketitle
\begingroup\small
\begin{center}
{\large \bf \sc
 K. Kefi, D.D. Repov\v{s} and K. Saoudi
}
\vskip 1.0cm
\end{center}
\endgroup

\begin{abstract}
We investigate existence and  multiplicity of weak solutions for fourth-order problems involving the Leray-Lions type operators in variable exponent spaces and improve a result of Bonanno and Chinn\`{i} (2011). We use variational methods and apply a multiplicity theorem of Bonanno and Marano (2010).
\end{abstract}

\textit{Keywords and Phrases}: Leray-Lions type operator, variational method, generalized Sobolev space, fourth-order problem, variable exponent.

\textit{ Mathematics Subject Classification (2010):} 35J20, 35J60, 35G30, 35J35, 46E35.

\section{Introduction }
 The objective of this work is to study   the existence of  solutions for the following problems involving the Leray-Lions type operators in variable exponent spaces
 \begin{equation}\label{P}
 \left\{
 \begin{array}{cc}
 \Delta\left(a(x, \Delta u)\right) = \lambda f(x,u)\;\;\mbox{in}\;\;  \Omega,\\
  u=\Delta u=0\,\,\, \mbox{ on}\,\,\, \partial{\Omega},
 \end{array}
 \right.
 \end{equation}
 where $\Omega$ is a bounded domain in $\R^{N\geq2}$ with  smooth boundary $\partial\Omega$, $\lambda>0$ is a parameter, $f$ is a Carath\'eodory  function,   $ p\in C(\overline\Omega)$ satisfies $\inf_{x\in\Omega}{p(x)}>\frac{N}{2}$, for all $x\in\Omega$, and the potential $a$ satisfies a set of conditions (see Section \ref{Assump}).  The  operators include the $p(x)$-biharmonic operator and other important cases.
 We point out that the extension from the $p$-biharmonic problem to the $p(x)$-biharmonic problem is nontrivial since
 the $p(x)$-biharmonic problem possesses a more complicated structure, for example it is nonhomogeneous and  it usually does not have the so-called first eigenvalue.
 Here, $ \Delta(a(x,\Delta u))$   is  the Leray-Lions operator, where  $a$ is a Carath\'{e}odory function satisfying  some  suitable supplementary conditions.
 
 Investigations of this type of operators has been going on in various fields, e.g. 
 in electrorheological fluids 
 (see  Ru\v{z}i\v{c}ka~\cite{ruzi}), 
 elasticity theory
 (see   Zhikov~\cite{zh2}),
 stationary thermorheological viscous flows of non-Newtonian fluids
 (see  Antontsev and Rodrigues~\cite{AR}), 
 image processing  
 (see  Chen, Levine and Rao~\cite{R00}),
  and 
   mathematical description of the processes filtration of barotropic gas through a porous medium  
   (see  Antontsev and Shmarev~\cite{AN}).
   For more details about this kind of operators the reader is referred to  
  Leray and Lions~\cite{Leray}
 (see  also
 Papageorgiou, R\u{a}dulescu and Repov\v{s}~\cite{PRR}
  and
 the references therein).
   
 We briefly recall the literature concerning
 related  problems involving the Leray-Lions type operators. The existence of three solutions for a problem involving
 the
  $p(x)
 $-Laplacian in a variable space was established by  
 Bonanno
 and Chinn\`{i}~\cite{BoCh}. 
 In particular, in the absence of
 small perturbations of the nonlinear term, they proved that one of the solutions is the trivial solution.  
 Bonanno and Chinn\`{i}~\cite{BoCh1}  also
 proved
 the existence of three solutions for a
 problem without small perturbations of the nonlinear term, whenever
 $p(x)>N.$ Motivated by these  results, we shall establish in this paper
 the existence of three nontrivial solutions for more general problems  involving the Leray-Lions type operators in variable exponent spaces (see  Theorem~\ref{principal} in Section~\ref{s3}).
 
 This paper is organized as follows: in Section~\ref{s2} we present preliminary definitions and results,
 Section~\ref{s3} is devoted to the statement and the proof proof of the main result, and Section~\ref{s4} presents an application.
 
 \section{Preliminaries}\label{s2}
 In this section we first review key definitions and basic properties of
 variable exponent Sobolev spaces and then introduce all conditions for parameters and nonlinearities which we need for the statement and proof of our main result (Theorem~\ref{principal}).
 \subsection{Variable exponent Sobolev spaces}\label{sec2} For a comprehensive treatment  we
 refer the reader to 
    R\u{a}dulescu and Repov\v{s}~\cite{RR}.
 Let $$C_{+}(\overline{\Omega}):=\{h\mid h\in C(\overline{\Omega}), h(x)>1,\ \mbox{for all}\;\; x\in \overline{\Omega}\},$$
 and let  $p\in C_{+}(\overline{\Omega}), p\in C_{+}(\overline{\Omega})$ be such that
 \begin{equation}\label{e2.2}
 1<p^{-}:=\displaystyle\min_{x\in\overline{\Omega}}p(x)\leq
 p^{+}:=\displaystyle\max_{x\in\overline{\Omega}}p(x)<+\infty.\end{equation}
 We define the Lebesgue space with variable exponent as  follows
 $$L^{p(x)}(\Omega):=\{u\mid u:\Omega\rightarrow \mathbb{R}\;\;\mbox{is measurable,} \int_{\Omega}|u(x)|^{p(x)}dx<\infty\},$$
 and we equip it with the  Luxemburg norm  
 $$|u|_{p(x)}:=\inf\left\{\mu>0\mid \int_{\Omega}\left|\frac{u(x)}{\mu}\right|^{p(x)}dx\leq 1\right\}.$$
 
 Variable exponent Lebesgue spaces are like classical Lebesgue spaces in many respects: they
 are Banach spaces
 and
  they are reflexive if and only if $1 < p^{-}\leq
 q^{+} < \infty$.  Moreover, the inclusion between Lebesgue spaces is generalized naturally, that is, if $q_{1}, q_{2}$ are such that
 $p_{1}(x) \leq p_{2}(x)$ a.e. $x \in \Omega$, then there exists a continuous embedding
 $L^{{p_{2}}(x)}(\Omega)\hookrightarrow L^{{p_{1}}(x)}(\Omega)$.
 
 For every $u \in L^{p(x)}(\Omega), v \in L^{p'(x)}(\Omega)$, the H\"older inequality
 \begin{equation}\label{ing}
 \left|\int_{\Omega}u v dx\right|\leq \left(\frac{1}{p^{-}}+\frac{1}{(p')^{-}}\right)|u|_{p(x)}|v|_{p'(x)}
 \end{equation}
 holds, where $p'(x)$ satisfies $\frac{1}{p(x)} + \frac{1}{p'(x)}=1$.
 The modular on the space $L^{p(x)}(\Omega)$ is the map $\rho_{p(x)}:L^{p(x)}(\Omega)\rightarrow \mathbb{R},$ defined by $$\rho_{p(x)}(u):=\int_{\Omega}|u|^{p(x)}dx.$$
 
 For any $k\in\mathbb{N},$ we define the Sobolev space with variable exponents as follows
 \begin{eqnarray*} W^{k,p(x)}(\Omega):=\{u\in L^{p(x)}(\Omega)\mid D^{\alpha}u\in L^{p(x)}(\Omega), |\alpha|\leq k\},
 \end{eqnarray*}
 where $\alpha := \left(\alpha_1, \alpha_2, . . . , \alpha_n\right)$ is a multi-index, $|\alpha|=\sum_{i=1}^{n}\alpha_i,$
 and $$D^{\alpha}u:=\frac{\partial^{|\alpha|}u}{\partial^{\alpha_1}x_1....\partial^{\alpha_N}x_N}.$$
 Then $W^{k,p(x)}(\Omega)$ is a separable and reflexive Banach space equipped with the norm
 $$\|u\|_{k,p(x)}:=\sum_{|\alpha|\leq k}|D^{\alpha}u|_{p(x)}.$$
 The space $W_{0}^{k,p(x)}(\Omega)$ is the closure of $C_{0}^{\infty}(\Omega)$ in $W^{k,p(x)}(\Omega)$.
 It is well-known that both $W^{2,p(x)}(\Omega)$ and $W_{0}^{1,p(x)}(\Omega)$ are separable and reflexive Banach spaces 
 (for more details see R\u{a}dulescu and Repov\v{s}~\cite{RR}).
 
 It follows that $X=W^{2,p(x)}(\Omega)\cap W_{0}^{1,p(x)}(\Omega)$
 is also a separable and reflexive Banach space, equipped with the norm
 $$\|u\|_{X}:=\|u\|_{W^{2,p(x)}(\Omega)}+\|u\|_{W_{0}^{1,p(x)}(\Omega)}.$$
 Let $$\|u\|:=\inf\left\{ \mu>0\mid \int_{\Omega}\left|\frac{\Delta u}{\mu}\right|^{p(x)} dx\leq 1\right\}$$
 represent a norm which is equivalent to $\|.\|_{X}$ on $X$  (see  Remark 2.1 in
 Amrouss and Ourraoui~\cite{elo}). 
 Therefore in what follows, we shall consider $\left(X, \|.\|\right).$
 The modular on the space $X$ is the map $\rho_{p(x)}:X\rightarrow \mathbb{R}$ defined by
 $$\rho_{p(x)}(u):=\int_{\Omega}|\Delta u|^{p(x)}dx.$$
 This mapping satisfies the following properties.
 \begin{lemma} (El Amrous, Moradi, Moussaoui~\cite{ela})\label{2.2} 
 For every $u, u_n \in W^{2, p(.)}(\Omega),$
 \begin{enumerate}
 \item[(1)] $\|u\|<1 \;(\mbox{resp.} =1, >1) \Longleftrightarrow \rho_{p(x)}(u)<1 \;(\mbox{resp.} =1, >1);$
 \item[(2)] $\displaystyle\min\{\|u\|^{p^{-}}, \|u\|^{p^{+}}\}\leq \rho_{p(x)}\leq\displaystyle\max\{\|u\|^{p^{-}}, \|u\|^{p^{+}}\};$  \hbox{and}
  \item [(3)] $\|u_n\|\rightarrow0\;(\mbox{resp.} \rightarrow\infty)\Leftrightarrow \rho_{p(x)}(u_n)\rightarrow0\;(\mbox{resp.} \rightarrow\infty).$
 \end{enumerate}
 \end{lemma}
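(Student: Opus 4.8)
The plan is to exploit the standard norm--modular relationship for Luxemburg-type norms, the only mild novelty being that the modular here involves $\Delta u$ rather than $u$ itself. Throughout I would work on $X$ with the norm $\|u\|$ and the modular $\rho_{p(x)}(u)=\int_{\Omega}|\Delta u|^{p(x)}\,dx$ as defined above; since the case $u=0$ is trivial, I would assume $u\neq0$.

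First I would isolate the key auxiliary fact that for a fixed $u\neq0$ the function
$$\varphi(\mu):=\int_{\Omega}\left|\frac{\Delta u}{\mu}\right|^{p(x)}\,dx=\int_{\Omega}\frac{|\Delta u|^{p(x)}}{\mu^{p(x)}}\,dx,\qquad \mu>0,$$
is continuous and strictly decreasing, with $\varphi(\mu)\to+\infty$ as $\mu\to0^{+}$ and $\varphi(\mu)\to0$ as $\mu\to+\infty$. Continuity and the two boundary limits follow by dominated and monotone convergence arguments exploiting the uniform bounds $1<p^{-}\le p(x)\le p^{+}<+\infty$, while strict monotonicity is immediate because $\mu\mapsto\mu^{-p(x)}$ is strictly decreasing for each $x$. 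By the intermediate value theorem there is then a unique $\mu_{0}>0$ with $\varphi(\mu_{0})=1$, and by the very definition of the Luxemburg norm $\|u\|=\mu_{0}$; in particular the infimum is attained and
$$\int_{\Omega}\frac{|\Delta u|^{p(x)}}{\|u\|^{p(x)}}\,dx=1.\qquad(\star)$$

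With $(\star)$ in hand, statement (1) follows at once from the strict monotonicity of $\varphi$: since $\rho_{p(x)}(u)=\varphi(1)$ while $\varphi(\|u\|)=1$, comparing the numbers $1$ and $\|u\|$ and using that $\varphi$ reverses order yields all three equivalences simultaneously. For statement (2) I would split according to whether $\|u\|\ge1$ or $\|u\|<1$ (the case $\|u\|=1$ being subsumed, since (1) then forces $\rho_{p(x)}(u)=1$). When $\|u\|\ge1$ one has $\|u\|^{p^{-}}\le\|u\|^{p(x)}\le\|u\|^{p^{+}}$ pointwise; writing $\rho_{p(x)}(u)=\int_{\Omega}\|u\|^{p(x)}\frac{|\Delta u|^{p(x)}}{\|u\|^{p(x)}}\,dx$ and using $(\star)$ to evaluate the remaining integral as $1$ gives
$$\|u\|^{p^{-}}\le\int_{\Omega}|\Delta u|^{p(x)}\,dx\le\|u\|^{p^{+}},$$
which is precisely the asserted two-sided bound because in this regime the minimum is $\|u\|^{p^{-}}$ and the maximum is $\|u\|^{p^{+}}$. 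The case $\|u\|<1$ is identical except that now $\|u\|^{p^{+}}\le\|u\|^{p(x)}\le\|u\|^{p^{-}}$, so the roles of the two exponents are interchanged.

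Finally, statement (3) is a formal consequence of (2). If $\|u_{n}\|\to0$ then eventually $\|u_{n}\|<1$, whence $\rho_{p(x)}(u_{n})\le\|u_{n}\|^{p^{-}}\to0$; conversely, if $\rho_{p(x)}(u_{n})\to0$ then eventually $\rho_{p(x)}(u_{n})<1$, so $\|u_{n}\|<1$ by (1) and $\|u_{n}\|\le\rho_{p(x)}(u_{n})^{1/p^{+}}\to0$. The two statements involving $\to\infty$ are handled symmetrically via the $\|u\|>1$ branch of (2). I expect the only genuinely delicate point to be the continuity and boundary behaviour of $\varphi$, which must be justified carefully from integrability together with the uniform bounds on $p(x)$; once the identity $(\star)$ is secured, everything else reduces to routine manipulation of the pointwise power inequalities.
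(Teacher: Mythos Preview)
Your argument is correct and follows the standard route for relating Luxemburg norms to their modulars: establish the unit-ball identity $(\star)$ via monotonicity and continuity of $\varphi$, then read off (1) from strict monotonicity, (2) from the pointwise sandwich $\|u\|^{p^{\mp}}\le\|u\|^{p(x)}\le\|u\|^{p^{\pm}}$, and (3) as a corollary of (2). Note, however, that the paper does not supply its own proof of this lemma at all; it is quoted as a known result from El~Amrous, Moradi and Moussaoui~\cite{ela}, so there is no in-paper argument to compare against. Your proof is precisely the expected one and would serve as a self-contained justification if one were desired.
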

 We recall that the critical Sobolev exponent is defined as follows:
 $$p^{\ast}(x):=\left\{
 \begin{array}{cc}
 \displaystyle \frac{Np(x)}{N-2p(x)}, \,\,\mbox{ if } \;p(x)<\frac{N}{2},\\
 \displaystyle \infty, \,\,\mbox{ if } \;p(x)\geq\frac{N}{2}.
 \end{array}
 \right.$$
 
 \begin{remark}\label{r1} (R\u{a}dulescu and Repov\v{s}~\cite{RR})
 \label{inj}
 Assume that $p\in C^{+}(\overline{\Omega})$  satisfies $p^-> \frac{N}{2}$.
 Then there exist a continuous embedding  $X\hookrightarrow W^{2,p^{-}}(\Omega)\cap W_{0}^{1,p^{-}}(\Omega) $ and a  compact embedding  $W^{2,p^{-}}(\Omega)\cap W_{0}^{1,p^{-}}(\Omega)\hookrightarrow C^{0}(\overline{\Omega})$, such that $X$ is compactly embedded in $C^{0}(\overline{\Omega})$ and
  $\|u\|_{\infty}\leq c_0 \|u\|,$ where $c_0$ is a positive constant and $\|u\|_{\infty}:=\sup_{x\in\Omega}|u(x)|.$
 \end{remark}
 \begin{proposition}(Gasi\'nski and Papageorgiou~\cite{GP})\label{compact}
 If $X$ is a reflexive Banach space, $Y$ is a Banach space, $Z\subset X$ is nonempty, closed and convex, and $J:Z\rightarrow Y$ is completely continuous, then $J$ is compact.
 \end{proposition}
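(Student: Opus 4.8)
The plan is to establish compactness sequentially. Since $Y$ is a metric space, relative compactness and relative sequential compactness coincide, so it suffices to show that $J$ carries every bounded sequence in $Z$ to a sequence in $Y$ that admits a norm-convergent subsequence. Throughout I understand "completely continuous" in the standard sense: $J$ maps weakly convergent sequences in $Z$ to strongly convergent sequences in $Y$. So let $(u_n)\subset Z$ be an arbitrary bounded sequence; the goal is to extract a subsequence along which $(J(u_n))$ converges in $Y$.

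First I would exploit reflexivity of $X$. Since $(u_n)$ is bounded and $X$ is reflexive, the Eberlein--\v{S}mulian theorem guarantees a subsequence $(u_{n_k})$ converging weakly in $X$ to some limit $u\in X$. The crucial step is then to verify that $u$ actually lies in the domain $Z$, so that $J(u)$ is defined and complete continuity can be invoked. This is precisely where the hypotheses that $Z$ is closed and convex are used: by Mazur's theorem a convex, norm-closed subset of a Banach space is weakly closed, and hence $u\in Z$. Finally, since $u_{n_k}\rightharpoonup u$ with $u\in Z$ and $J$ is completely continuous, we obtain $J(u_{n_k})\to J(u)$ strongly in $Y$. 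Thus the image under $J$ of the bounded sequence $(u_n)$ has a convergent subsequence, so $J(B)$ is relatively compact in $Y$ for every bounded $B\subset Z$; that is, $J$ is compact.

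The only genuine obstacle is the step ensuring $u\in Z$: without convexity and closedness one could not apply Mazur's theorem, the weak limit might escape the domain of $J$, and complete continuity would no longer be applicable. Everything else reduces to a routine combination of the reflexivity of $X$ (to produce weakly convergent subsequences) with the defining property of a completely continuous map (to upgrade weak convergence to strong convergence of the images).
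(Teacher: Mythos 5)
Your argument is correct and complete: weak sequential compactness of bounded sets in a reflexive space, Mazur's theorem to keep the weak limit inside the closed convex set $Z$, and complete continuity to upgrade to strong convergence of the images is exactly the standard proof of this fact. The paper itself states this proposition without proof, citing Gasi\'nski and Papageorgiou, so there is nothing to compare against; your write-up supplies precisely the argument that the cited reference uses, and you correctly identify the closedness and convexity of $Z$ as the hypotheses that make the weak limit admissible.
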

 Our key tool will be  Theorem 3.6 in   
 Bonanno and Marano~\cite{bonano}, 
 which  we restate here in a more convenient form.
 \begin{theorem}(Bonanno and Marano~\cite{bonano})\label{bonano} 
 Let X be a reflexive real Banach space and let
 $\Phi:X\rightarrow\mathbb{R}$
 be  a coercive, continuously G\^{a}teaux differentiable and sequentially weakly lower semicontinuous functional, whose G\^{a}teaux derivative admits
 a continuous inverse on $X$.
  Let $\Psi:X\rightarrow \mathbb{R}$ be a continuously G\^{a}teaux differentiable
 functional whose G\^{a}teaux derivative is compact and such that
 $\inf_{x\in X}\Phi(X)=\Phi(0)=\Psi(0)=0.$
 Assume that there exist
  $r > 0, \overline{x}\in X$,
   such that
    $r < \Phi(\overline{x}),$
  $r^{-1}\sup_{\Phi(x)\leq r}\Psi(x)<\frac{\Psi(\overline{x})}{\Phi(\overline{x})},$
 and for each 
 $\lambda\in\Lambda_r:=\left(\frac{\Phi(\overline{x})}{\Psi(\overline{x})}, r(\sup_{\Phi(x)\leq r}\Psi(x))^{-1}\right),$
 functional
 $\Phi-\lambda \Psi$
 is coercive.
 Then for each $\lambda\in\Lambda_r$, functional $\Phi-\lambda \Psi$ has at least three distinct critical
 points in $X$.
 \end{theorem}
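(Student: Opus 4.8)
The plan is to obtain the three critical points as critical points of the energy functional $I_\lambda:=\Phi-\lambda\Psi$, exhibiting two of them as minima and the third as a saddle of mountain--pass type. First I would check that $I_\lambda$ satisfies the Palais--Smale condition for every $\lambda\in\Lambda_r$: along a sequence $(u_n)$ with $I_\lambda(u_n)$ bounded and $I_\lambda'(u_n)\to 0$, the assumed coercivity of $I_\lambda$ forces $(u_n)$ to be bounded, hence, by reflexivity, weakly convergent along a subsequence to some $u$; since $\Psi'$ is compact we get $\Psi'(u_n)\to\Psi'(u)$ strongly, so $\Phi'(u_n)=I_\lambda'(u_n)+\lambda\Psi'(u_n)$ converges strongly, and the continuity of $(\Phi')^{-1}$ yields $u_n\to(\Phi')^{-1}(\lambda\Psi'(u))$ strongly. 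Thus Palais--Smale sequences are relatively compact.

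Next I would produce a first critical point by minimizing on a sublevel set. Put $a_r:=\sup_{\Phi(x)\le r}\Psi(x)$ and $S:=\{u\in X:\Phi(u)\le r\}$. Since $\Phi$ is coercive and sequentially weakly lower semicontinuous, $S$ is sequentially weakly compact, and since $\Psi$ is sequentially weakly continuous (its derivative being compact), $I_\lambda|_S$ attains its infimum at some $u_1\in S$. Because $0\in S$ and $I_\lambda(0)=0$, while on $\{\Phi=r\}$ the choice $\lambda<r/a_r$ gives $I_\lambda(u)=r-\lambda\Psi(u)\ge r-\lambda a_r>0$, the minimizer cannot lie on $\{\Phi=r\}$; hence $\Phi(u_1)<r$, so $u_1$ is a local minimum of $I_\lambda$ on $X$ with $I_\lambda(u_1)\le 0<r-\lambda a_r$.

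Now I would set up the mountain--pass geometry. The point $\overline{x}$ satisfies $\Phi(\overline{x})>r$, and $\lambda>\Phi(\overline{x})/\Psi(\overline{x})$ gives $I_\lambda(\overline{x})=\Phi(\overline{x})-\lambda\Psi(\overline{x})<0$. Thus $u_1$ and $\overline{x}$ lie on opposite sides of the level set $\Phi^{-1}(r)$, and any continuous path joining them must cross $\Phi^{-1}(r)$, where $I_\lambda\ge r-\lambda a_r>\max\{I_\lambda(u_1),I_\lambda(\overline{x})\}$. The mountain--pass theorem, applicable because $(PS)$ holds, then furnishes a critical point $u_3$ with $I_\lambda(u_3)\ge r-\lambda a_r>0$, which is therefore distinct from $u_1$. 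Finally, coercivity together with sequential weak lower semicontinuity provides a global minimizer $u_2$ with $I_\lambda(u_2)=\inf_X I_\lambda\le I_\lambda(\overline{x})<0$, so that $u_2\ne u_3$.

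The main obstacle is to guarantee that the three points are genuinely distinct, i.e. that $u_2\ne u_1$. If the global minimum lies in $\{\Phi>r\}$ this is immediate; the delicate case is when the global minimum falls inside $S$ and coincides with $u_1$. Here one must exploit the strict inequality $r^{-1}a_r<\Psi(\overline{x})/\Phi(\overline{x})$ together with the fact that $\overline{x}$ sits in a separate basin, beyond the ridge $\Phi^{-1}(r)$, to extract via the deformation lemma and a Pucci--Serrin type argument a further critical point of negative energy on the $\overline{x}$-side, hence different from both $u_1$ and $u_3$. I expect this separation step, rather than the existence of the minima or of the mountain--pass level, to be where the real work lies.
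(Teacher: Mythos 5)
First, a point of reference: the paper does not prove this statement at all --- it is imported verbatim (``restated in a more convenient form'') from Bonanno and Marano~\cite{bonano}, so there is no in-paper proof to compare against. Your proposal is therefore an attempt at reproving the cited abstract theorem. The scaffolding you build is sound: the verification of the Palais--Smale condition from coercivity, reflexivity, compactness of $\Psi'$ and continuity of $(\Phi')^{-1}$ is the standard argument and is correct; the minimizer $u_1$ of $I_\lambda=\Phi-\lambda\Psi$ over $\{\Phi\le r\}$ does land in the open set $\{\Phi<r\}$ because $I_\lambda(0)=0<r-\lambda a_r\le\inf_{\{\Phi=r\}}I_\lambda$; the mountain-pass point $u_3$ at level $\ge r-\lambda a_r>0$ and the global minimizer $u_2$ at level $\le I_\lambda(\overline{x})<0$ both exist and are distinct from each other and from $u_3$ by energy comparison.

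The genuine gap is exactly the one you flag and then do not close: ruling out $u_1=u_2$. This is not a removable technicality. On $\{\Phi\le r\}$ one only has $I_\lambda\ge-\lambda a_r$, while $I_\lambda(\overline{x})=\Phi(\overline{x})-\lambda\Psi(\overline{x})$ tends to $0^-$ as $\lambda\downarrow\Phi(\overline{x})/\Psi(\overline{x})$; a short computation shows the global minimum is forced outside $\{\Phi\le r\}$ only when $\lambda>\Phi(\overline{x})/(\Psi(\overline{x})-a_r)$, which is a strictly smaller interval than $\Lambda_r$. So for $\lambda$ near the left endpoint of $\Lambda_r$ the global minimizer may well lie in $\{\Phi<r\}$ and coincide with $u_1$, and your construction then yields only two critical points. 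The missing argument must \emph{localize} a third critical point on the $\overline{x}$-side of the ridge: since $\{I_\lambda<r-\lambda a_r\}$ is disjoint from $\{\Phi=r\}$, the connected component $A$ of this open sublevel set containing $\overline{x}$ is trapped in $\{\Phi>r\}$, with $I_\lambda=r-\lambda a_r$ on $\partial A$; applying Ekeland's variational principle on $\overline{A}$ (the infimum there is $\le I_\lambda(\overline{x})<0$, so minimizing sequences stay in the open set $A$) together with $(PS)$ produces a critical point $v^*\in A$ with $I_\lambda(v^*)<0$ and $\Phi(v^*)>r$, hence distinct from both $u_1$ and $u_3$. Your text names ``the deformation lemma and a Pucci--Serrin type argument'' but does not carry out any such step, and since this is precisely where the content of the theorem lies, the proposal as written is incomplete.
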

 
   \subsection{Conditions for parameters and nonlinearities}\label{Assump}
   In   problem $(\ref{P}),$  parameters and  nonlinearities are assumed to satisfy
 the following conditions:
 
 \begin{enumerate}
 \item[${(H_1)}$]
 $a:\overline{\Omega}\times\R\rightarrow\R$ is a Carath\'{e}odory function such that $a(x, 0)=0,$ for a.e. $x\in\Omega$.
 \item[${(H_2)}$]
 There exist  $c_1>0$ and a nonnegative function $d\in L^{\frac{p(x)}{p(x)-1}}(\Omega)$, such that
 $$|a(x, t)|\leq c_1(d(x)+|t|^{p(x)-1}), \  \mbox{for a.e.}\; x\in\Omega \ \hbox{and all} \  t\in \R.$$
 \item[${(H_3)}$]
 For all  $ s, t\in\R$, the inequality
  $|a(x, t)-a(x, s)|(t-s)\geq 0\;\;\mbox{ holds, for a.e.}\; x\in\Omega,$ with equality if and only if $s=t$.
 \item[${(H_4)}$]
 There exists $1 \leq c_2$ such that
 $$c_2|t|^{p(x)}\leq \min\{ a(x, t)t,p(x)A(x, t)\},\;\mbox{for a.e.} x\in\Omega \; \mbox{and all}\; s, t\in\R,$$
 where $c_1$ is the constant from condition  $(H_2)$ and $A :\overline{\Omega}\times\R\rightarrow\R$ represents the antiderivative of $a,$
 $$A(x, t)=\int_{0}^{t}a(x, s)ds.$$
 \item[${ {(H_5)}}$]$
 |f(x,t)|\leq \xi(x)+\zeta |t|^{q(x)-1},\ \mbox{for all} \ (x,t) \in \Omega \times \mathbb{R},$
 where $ \xi\in L^1(\Omega)$,  $\zeta$ is a positive constant, and $1< q^-\leq q^+< p^-$.
 \end{enumerate}
 
 \noindent
 \begin{remark} (Boureanu~\cite{bour})\label{c3} Concerning conditions $(H_{1})-(H_{5})$, the following can be observed:
 \begin{enumerate}
  \item[(i)] $A(x, t)$ is a $C^{1}$-Carath\'{e}odory function, i.e., for every $t\in\R,$ $A(., t):\Omega\rightarrow\R$ is measurable
 and for a.e. $x\in\Omega,$ $A(x, .)$ is $C^{1}(\R).$
 \item[(ii)]
 There exists a constant $c_3$ such that
$|A(x, t)|\leq c_3(d(x)|t|+|t|^{p(x)}), \ \mbox{for a.e.} \  x\in\Omega \;\;\mbox{and  all}\;\; t\in \R.$
 \end{enumerate}
 \end{remark}
 In order to formulate the variational approach to problem  $(\ref{P}),$
  recall that a weak solution for our problem satisfies the following definition.
 \begin{definition} We say that $u\in X\backslash\{0\}$ is a weak solution of problem $(\ref{P})$ if $\Delta u=0$ on $\partial{\Omega}$ and
 $$ \int_{\Omega}a(x, \Delta u)\Delta vdx-\lambda\int_{\Omega}f(x,u) vdx=0,\;\mbox{ for all} \  \;v\in X.$$
 \end{definition}
 Denote
 $$\Phi(u):=\int_{\Omega}F(x,u)dx.$$
 The Euler-Lagrange functional corresponding to problem $(\ref{P}),$ is defined by $\Psi_{\lambda}: X\to\mathbb{R},$ where
  $\Psi_{\lambda}(u)=J(u)- \lambda\Phi(u),\,\,\mbox{for all}\;u\in X,$
  such that $$J(u)=\int_{\Omega}A(x, \Delta u)dx.$$
  By condition $(H_5)$, we have $$ |F(x,u)|\leq \xi(x)|u|+\frac{\zeta}{q(x)}|u|^{q(x)},$$
  hence
   $$\Phi(u)\leq |\xi(x)|_{L^1(\Omega)}\|u\|_{\infty}+\frac{\zeta}{q^-}\int_{\Omega}(|u|^{q^+}+|u|^{q^-})dx.$$
  so we get
  $$\Phi(u)\leq  |\xi(x)|_{L^1(\Omega)}\|u\|_{\infty}+\frac{\zeta}{q^-}(c_{0}^{q^+}\|u\|^{q^+}+c_{0}^{q^-}\|u\|^{q^-})|\Omega|.$$
  This shows that $\Phi$ is well-defined.
 In the sequel, we shall need the following lemma.
 \begin{lemma}\label{lm}$(Boureanu~\cite{bour})$
 Functional $J$ is coercive on $X$ and $J':X\rightarrow X^*$ is a strictly monotone homeomorphism.
 \end{lemma}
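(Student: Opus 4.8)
The plan is to verify the two assertions separately, with the strict monotonicity of $J'$ serving as the bridge to both the injectivity and the continuity of the inverse.

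For coercivity, I would first use $(H_4)$ in the form $p(x)A(x,t)\ge c_2|t|^{p(x)}$ to obtain the pointwise bound $A(x,\Delta u)\ge \frac{c_2}{p^+}|\Delta u|^{p(x)}$, whence $J(u)\ge \frac{c_2}{p^+}\rho_{p(x)}(u)$. For $\|u\|>1$, Lemma \ref{2.2}(2) gives $\rho_{p(x)}(u)\ge\|u\|^{p^-}$, so $J(u)\ge \frac{c_2}{p^+}\|u\|^{p^-}\to\infty$ as $\|u\|\to\infty$ since $p^->1$; this settles coercivity.

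I would then address the operator $J'$. Using the Carath\'eodory property $(H_1)$ and the growth bound $(H_2)$, which places $a(x,\Delta u)$ in $L^{p'(x)}(\Omega)$ whenever $\Delta u\in L^{p(x)}(\Omega)$, a standard dominated-convergence argument shows that $J$ is G\^ateaux differentiable with $\langle J'(u),v\rangle=\int_{\Omega}a(x,\Delta u)\Delta v\,dx$ and that $J':X\to X^*$ is continuous. For strict monotonicity I would compute, for $u\ne v$, $\langle J'(u)-J'(v),u-v\rangle=\int_{\Omega}(a(x,\Delta u)-a(x,\Delta v))(\Delta u-\Delta v)\,dx$; by $(H_3)$ the integrand is nonnegative, and it vanishes identically only if $\Delta u=\Delta v$ a.e., i.e. $\|u-v\|=0$ and $u=v$. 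Hence $J'$ is strictly monotone, in particular injective. Coercivity of $J'$ follows from $(H_4)$, since $\langle J'(u),u\rangle\ge c_2\rho_{p(x)}(u)\ge c_2\|u\|^{p^-}$ for $\|u\|>1$; being continuous, monotone and coercive on the reflexive space $X$, $J'$ is surjective by the Browder--Minty theorem, so $J'$ is a bijection.

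The remaining, and most delicate, task is the continuity of $(J')^{-1}$, for which I would establish that $J'$ is of type $(S_+)$: if $u_n\rightharpoonup u$ and $\limsup_n\langle J'(u_n),u_n-u\rangle\le 0$, then $u_n\to u$ strongly. Monotonicity forces $\langle J'(u_n)-J'(u),u_n-u\rangle\to 0$, so the nonnegative integrands $(a(x,\Delta u_n)-a(x,\Delta u))(\Delta u_n-\Delta u)$ tend to $0$ in $L^1$ and, along a subsequence, a.e. Combining the continuity of $a(x,\cdot)$, the strict monotonicity $(H_3)$ and the bounds $(H_2)$ and $(H_4)$ (the latter ruling out $|\Delta u_n(x)|\to\infty$), I would deduce $\Delta u_n\to\Delta u$ a.e.; then $a(x,\Delta u_n)\rightharpoonup a(x,\Delta u)$ in $L^{p'(x)}(\Omega)$ and one checks that $\int_{\Omega}a(x,\Delta u_n)\Delta u_n\,dx\to\int_{\Omega}a(x,\Delta u)\Delta u\,dx$. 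Since these integrands are nonnegative, converge a.e., and have convergent integrals, they converge in $L^1$, which yields uniform integrability of $\{|\Delta u_n|^{p(x)}\}$; Vitali's theorem then gives $\rho_{p(x)}(u_n-u)\to 0$, hence $\|u_n-u\|\to 0$ by Lemma \ref{2.2}(3). With $(S_+)$ in hand, continuity of the inverse is routine: for $f_n\to f$ in $X^*$ I would set $u_n=(J')^{-1}(f_n)$, bound $\{u_n\}$ by coercivity, extract a weak limit, apply $(S_+)$ to get strong convergence, and identify the limit through the continuity of $J'$. The genuine obstacle is the passage from $L^1$-convergence of the monotone integrand to strong convergence in $X$: because $a$ is not homogeneous, no explicit quantitative monotonicity inequality is available, so one must route the argument through a.e. convergence and Vitali's theorem.
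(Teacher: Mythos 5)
Your proposal is correct, and for the only part that the paper actually proves --- coercivity --- you argue exactly as the paper does: $(H_4)$ gives $A(x,\Delta u)\ge \frac{c_2}{p^+}|\Delta u|^{p(x)}$, hence $J(u)\ge \frac{c_2}{p^+}\rho_{p(x)}(u)\ge \frac{c_2}{p^+}\|u\|^{p^-}$ for $\|u\|>1$ via Lemma \ref{2.2}. For everything else (strict monotonicity of $J'$ and the homeomorphism property) the paper gives no argument at all and simply cites Boureanu \cite{bour}, whereas you supply the standard monotone-operator route: $(H_3)$ yields strict monotonicity and injectivity (note that $(H_3)$ as printed has a spurious absolute value, $|a(x,t)-a(x,s)|(t-s)\ge 0$; you correctly use the intended form $(a(x,t)-a(x,s))(t-s)\ge 0$), $(H_2)$ gives continuity of the Nemytskii map and hence of $J'$, $(H_4)$ gives coercivity of $J'$ so that Browder--Minty yields surjectivity, and the $(S_+)$ property gives continuity of the inverse. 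Your $(S_+)$ argument is only sketched at its most delicate point --- passing from $L^1$-convergence of the monotone integrand to a.e.\ convergence of $\Delta u_n$ and then to uniform integrability of $|\Delta u_n|^{p(x)}$ --- but the steps you indicate ($(H_4)$ excluding blow-up, strict monotonicity identifying the pointwise limit, Vitali for the modular) are exactly the ones that make the standard proof work, so there is no genuine gap; you have in effect reconstructed the proof that the paper outsources.
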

 \begin{proof}
  It is clear from Lemma \ref{2.2} and hypothesis $(H_4)$ that for $u\in X$ with $\|u\| >1,$
 \begin{eqnarray}\label{coe}
 J(u)&\geq&\int_{\Omega}\frac{c_2}{p(x)}|\Delta u|^{p(x)}dx
 \geq \frac{1}{p^+}\rho_{p(x)}(u)
 \geq  \frac{1}{p^+} \|u\|^{p^-},
 \end{eqnarray}
 and thus $J$ is coercive.
 For the rest of the proof of Lemma \ref{lm} see
 Boureanu~\cite{bour}.
 \end{proof}
  Next, we shall show
  that
   $\Phi'(u)$ is compact. Let $v_n\rightharpoonup v$ in $X.$ Remark \ref{inj} asserts that $v_n\rightarrow v$ in $C_{0}(\overline\Omega)$, so for any $u\in X$,
 \begin{eqnarray*}
 |<\Phi'(u),v_n>|-|<\Phi'(u),v>|&\leq& \|v_n-v\|_{\infty}\int_{\Omega} f(x,u)dx.
 \end{eqnarray*}
 As a consequence of condition $(H_5)$, we have
 $$|<\Phi'(u),v_n>|\rightarrow|<\Phi'(u),v>|, \ \hbox{ as} \ n\rightarrow +\infty.$$
  This means that $\Phi'(u)$ is completely continuous, hence compact by  Proposition \ref{compact}.
 
 \section{The Main Result}\label{s3}
 
 Setting $\delta(x):=\sup\{\delta>0\mid B(x,\delta)\subseteq\Omega\},$
 for all
 $x\in\Omega,$
  one can prove that there exists $x_0\in\Omega$ such that $B(x_0,D)\subseteq\Omega$,
 where $D:=\sup_{x\in\Omega}\delta(x).$
 For each $r > 0$, let 
  $\gamma_r:=\max\{(p^{+} r)^{\frac{1}{p^+}},(p^+ r)^{\frac{1}{p^-}}\}.$
 Let
 $$L:=w(D^N-(\frac{D}{2})^N),
 \
 \hbox{where}
 \
  w:=\frac{\pi^{\frac{N}{2}}}{\frac{N}{2}\Gamma(\frac{N}{2})},$$
 and $\Gamma$ denotes the Euler function. We can now state the main result of this paper.
 \begin{theorem}\label{principal}\textbf{}
 Let $a:\Omega\times\mathbb{R}^N\rightarrow R^N$ be  potential satisfying conditions $(H_2)-(H_4)$
 and
   $f:\Omega\times \mathbb{R}\rightarrow \mathbb{R}$  a Carath\'eodory  function satisfing condition $(H_5)$ and the following requirement
  $$\essinf_{x\in\Omega}F(x,t):=\essinf_{x\in\Omega}\displaystyle\int_{0}^{t}f(x,s)ds\geq 0,
  \
  \hbox{for all}
  \  \
  t\in[0,h],$$ where h is a nonnegative constant.
 Suppose that there exist $r>0,h>0$ such that
 $$
 r<\frac{L}{p^+}\min\Big\{\Big(\frac{8hN}{3D^2}\Big)^{p^-},\Big(\frac{8hN}{3D^2}\Big)^{p^+}\Big\},
 $$
 \begin{equation}\label{r}
 \beta_h := \frac{w(\frac{D}{2})^N \essinf_{x\in\Omega}F(x,h)}{c_3 
 L^{\frac{1}{p^+}}\Big[N^{\frac{1}{p^+}}\frac{8h}{3D^2} |d(x)|_{\frac{p(x)}{p(x)-1}}
 + L^{\frac{p^{+}-1}{p^+}}\max\big\{(\frac{8hN}{3D^2})^{p^-},
 (\frac{8hN}{3D^2})^{p^+}\big\}\Big]}
 \end{equation} 
  $$
 >\alpha_r:=\frac{1}{r}\int_{\Omega}\sup_{|t|\leq c_0\gamma_r}F(x,t)dx.
 $$
  Then for every $\lambda\in\overline{\Lambda}:=(\frac{1}{\beta_h}, \frac{1}{\alpha_r}),$ problem \eqref{P} admits at least three weak solutions.
 \end{theorem}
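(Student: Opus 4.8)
The plan is to apply the three-critical-points theorem of Bonanno and Marano (Theorem~\ref{bonano}) to the energy functional $J-\lambda\Phi$, identifying $J$ with the role of the functional called $\Phi$ there and the paper's $\Phi$ with the role of $\Psi$; by the weak-solution definition the critical points of $J-\lambda\Phi$ are exactly the weak solutions of \eqref{P}. First I would collect the structural hypotheses. By Lemma~\ref{lm}, $J$ is coercive and $J':X\to X^*$ is a strictly monotone homeomorphism, so $J'$ admits a continuous inverse; that $J$ is continuously G\^ateaux differentiable and sequentially weakly lower semicontinuous follows from conditions $(H_1)$--$(H_4)$, the monotonicity in $(H_3)$ yielding convexity of $t\mapsto A(x,t)$ and hence weak lower semicontinuity of $v\mapsto\int_\Omega A(x,v)\,dx$ composed with the bounded linear map $\Delta:X\to L^{p(x)}(\Omega)$. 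The functional $\Phi$ is continuously G\^ateaux differentiable with compact derivative, as shown just before Section~\ref{s3} via Remark~\ref{inj} and Proposition~\ref{compact}. Finally $J(0)=\Phi(0)=0$ because $a(x,0)=0$ gives $A(x,0)=0$, and $\inf_X J=0$ since $(H_4)$ forces $A(x,t)\ge 0$; hence $\inf_X J=J(0)=\Phi(0)=0$.

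Next I would build the competitor $\overline{x}$. Using the ball $B(x_0,D)\subseteq\Omega$, I take a radially symmetric bump equal to $h$ on $B(x_0,D/2)$, equal to $0$ outside $B(x_0,D)$, and interpolated by a fixed polynomial profile on the annulus $B(x_0,D)\setminus B(x_0,D/2)$, matched in a $C^1$ fashion across both interfaces so that $\overline{x}\in X$. Since $\Delta\overline{x}$ vanishes on the inner ball and outside, every integral localises to the annulus, whose measure is exactly $L=w\big(D^N-(D/2)^N\big)$. The core computation is to control $|\Delta\overline{x}|$ on the annulus by $\tfrac{8hN}{3D^2}$; feeding this into $(H_4)$ gives the lower bound $J(\overline{x})\ge\tfrac{L}{p^+}\min\{(\tfrac{8hN}{3D^2})^{p^-},(\tfrac{8hN}{3D^2})^{p^+}\}$, which together with the standing assumption on $r$ yields $r<J(\overline{x})$, i.e. the requirement $r<\Phi(\overline{x})$ of Theorem~\ref{bonano}. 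Dually, Remark~\ref{c3}(ii) bounds $J(\overline{x})$ from above by $c_3\int_{\mathrm{annulus}}(d(x)|\Delta\overline{x}|+|\Delta\overline{x}|^{p(x)})\,dx$; estimating the first term by the H\"older inequality \eqref{ing} and the second by the modular bound of Lemma~\ref{2.2} reproduces exactly the denominator of $\beta_h$, while $\Phi(\overline{x})\ge w(D/2)^N\essinf_{x\in\Omega}F(x,h)$ (nonnegativity of $F$ on $[0,h]$ makes the annular contribution harmless) reproduces its numerator. Thus $\beta_h$ is a lower bound for $\Phi(\overline{x})/J(\overline{x})$, equivalently $1/\beta_h\ge J(\overline{x})/\Phi(\overline{x})$.

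For the second competitor I would control the sublevel set. From $(H_4)$ and Lemma~\ref{2.2}, $J(u)\le r$ forces $\|u\|\le\gamma_r$, whence Remark~\ref{inj} gives $\|u\|_\infty\le c_0\gamma_r$, so $\sup_{J(u)\le r}\Phi(u)\le\int_\Omega\sup_{|t|\le c_0\gamma_r}F(x,t)\,dx=r\,\alpha_r$; therefore $r\big(\sup_{J\le r}\Phi\big)^{-1}\ge 1/\alpha_r$, and combined with $1/\beta_h\ge J(\overline{x})/\Phi(\overline{x})$ this gives $\overline{\Lambda}=(1/\beta_h,1/\alpha_r)\subseteq\Lambda_r=\big(J(\overline{x})/\Phi(\overline{x}),\,r(\sup_{J\le r}\Phi)^{-1}\big)$, while the hypothesis $\beta_h>\alpha_r$ is precisely the inequality $r^{-1}\sup_{J\le r}\Phi<\Phi(\overline{x})/J(\overline{x})$ demanded by the theorem. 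Coercivity of $J-\lambda\Phi$ for $\lambda\in\Lambda_r$ follows from $(H_5)$ together with $q^+<p^-$: the bound $\Phi(u)\le|\xi|_{L^1(\Omega)}\|u\|_\infty+\tfrac{\zeta}{q^-}(c_0^{q^+}\|u\|^{q^+}+c_0^{q^-}\|u\|^{q^-})|\Omega|$ grows strictly slower than the coercive lower bound $J(u)\ge\tfrac{1}{p^+}\|u\|^{p^-}$ for $\|u\|>1$, so $J-\lambda\Phi\to+\infty$. With every hypothesis of Theorem~\ref{bonano} verified, it produces three distinct critical points of $J-\lambda\Phi$, hence three weak solutions of \eqref{P}, for each $\lambda\in\overline{\Lambda}$.

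I expect the test-function step to be the main obstacle: one must exhibit a profile on the spherical annulus that is admissible in $X$ (hence $C^1$ across both interfaces, so that $\Delta\overline{x}\in L^{p(x)}$) while simultaneously yielding the stated constant $\tfrac{8hN}{3D^2}$ for the magnitude of its Laplacian. Since $\Delta\overline{x}$ cannot be made exactly constant on an annulus under two-sided $C^1$ matching, the delicate point is to track the $\min$/$\max$ alternatives of Lemma~\ref{2.2} and the H\"older constants carefully enough that the lower estimate giving $r<J(\overline{x})$ and the upper estimate defining $\beta_h$ stay mutually compatible.
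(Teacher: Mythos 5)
Your proposal follows the paper's proof essentially step for step: the same casting of $J$ and $\Phi$ in the roles of the two functionals of Theorem~\ref{bonano}, the same appeal to Lemma~\ref{lm} and to the compactness argument preceding Section~\ref{s3} for the structural hypotheses, the same sublevel-set estimate $\sup_{J(u)\le r}\Phi(u)\le r\alpha_r$ via Lemma~\ref{2.2} and Remark~\ref{inj}, the same coercivity computation from $(H_5)$ and \eqref{coe}, and the same inclusion $\overline{\Lambda}\subseteq\Lambda_r$. The only step you leave open is the construction of the competitor, and there the paper simply writes one down: $\overline{v}=h$ on $B(x_0,\frac{D}{2})$, $\overline{v}=0$ off $B(x_0,D)$, and $\overline{v}=\frac{4h}{3D^2}(D^2-|x-x_0|^2)$ on the annulus, whose Laplacian equals the exact constant $-\frac{8hN}{3D^2}$ there; both bounds on $J(\overline{v})$ and the bound on $\Phi(\overline{v})$ are then read off exactly as you describe.

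That said, the obstacle you flag is genuine, and it is a gap in the paper rather than merely in your outline. The paper's $\overline{v}$ is only Lipschitz across the two spheres: its radial derivative jumps from $0$ to $-\frac{4h}{3D}$ at $|x-x_0|=\frac{D}{2}$ and from $-\frac{8h}{3D}$ to $0$ at $|x-x_0|=D$, so its distributional Laplacian carries surface measures on those spheres and $\overline{v}\notin W^{2,p(x)}(\Omega)$. (This is harmless in first-order $W^{1,p(x)}$ settings, where Lipschitz suffices, but not here.) Your observation that a constant, nonzero Laplacian on the annulus is incompatible with vanishing normal derivative at both radii is correct: for a radial $v$ with $\Delta v=c$ one has $v'(\rho)=\frac{c}{N}\rho+a(2-N)\rho^{1-N}$, and no single $a$ kills $v'$ at both $\rho=\frac{D}{2}$ and $\rho=D$ unless $c=0$. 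Hence a genuinely admissible ($C^{1,1}$-matched) profile must have nonconstant Laplacian, and the constant $\frac{8hN}{3D^2}$ in the hypotheses of Theorem~\ref{principal} would have to be replaced by the actual extrema of the Laplacian of such a corrected profile, with the min/max alternatives of Lemma~\ref{2.2} and the estimate of Remark~\ref{c3}(ii) re-tracked accordingly. In short: your proof is the paper's proof, with an honest flag on the one step the paper gets wrong; completing either argument requires repairing that test function and the attendant constants.
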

 \begin{proof}
  We shall apply Theorem \ref{bonano}. Let $X=W^{2,p(x)}(\Omega)\cap W_{0}^{1,p(x)}(\Omega)$
  and
 $\Psi_{\lambda}(u):= J(u)-\lambda\Phi(u)$, for all $u \in X$, where
 $$J(u)=\int_{\Omega}A(x, \Delta u)dx
 \ \
 \hbox{and}
 \ \
 \phi(u)=\int_{\Omega}F(x,u)dx.$$
 As we have seen before, functionals $J$ and $\Phi$ satisfy the regularity assumptions
 of Theorem \ref{bonano}.
 Now let $\overline{v}\in X$ be defined by
 \begin{equation*}
 \overline{v}=\left\{
 \begin{array}{ccc}
 &0,& \mbox{if}\; x\in\Omega\setminus B(x_0,D)\\
 &h,&   \mbox{if}\; x\in B(x_0,\frac{D}{2})\\
 &\frac{4h}{3D^2}(D^2-|x-x_0|^2),& \quad \quad \mbox{if}\; x\in B(x_0,D)\setminus B(x_0,\frac{D}{2}),
 \end{array}
 \right.
 \end{equation*}
 where $|.|$ denotes the Euclidean norm in $\mathbb{R}^N$, and
 \begin{equation*}
 \bigtriangleup\overline{v}=\left\{
 \begin{array}{cc}
 0,\quad  \quad  \mbox{if} \; x\in\Omega\setminus B(x_0,D)\cup B(x_0,\frac{D}{2})\\
 [2mm]
 \frac{-8h}{3D^2}N, \quad \quad \mbox{if}\;   x\in B(x_0,D)\setminus B(x_0,\frac{D}{2}).
 \end{array}
 \right.
 \end{equation*}
 Using the above information, Remark \ref{r1}, Lemma \ref{2.2}, and the continuity of
 embedding $L^{p^+}(\Omega)\hookrightarrow L^{p(x)}(\Omega)$, we obtain
 \begin{eqnarray*}
 \hspace{-1cm}&&\frac{L}{p^+}\min\big\{(\frac{8hN}{3D^2})^{p^-},(\frac{8hN}{3D^2})^{p^+}\big\}\leq J(\overline{v})\\
 &\leq& c_3 L^{\frac{1}{p^+}}\Big[N^{\frac{1}{p^+}}\frac{8h}{3D^2} |d(x)|_{\frac{p(x)}{p(x)-1}}
 +L^{\frac{p^{+}-1}{p^+}} \max\big\{(\frac{8hN}{3D^2})^{p^-},(\frac{8hN}{3D^2})^{p^+}\big\}\Big],
 \end{eqnarray*}
 and $$\Phi(\overline{v})\geq\displaystyle\int_{B(x_0,\frac{D}{2})}F(x,\overline{v}(x)dx\geq w(\frac{D}{2})^N\essinf_{x\in\Omega}F(x,h).$$
 It follows from \eqref{r}  that $r<J(\overline{v})$.
 Moreover, since the embedding $X\hookrightarrow C_{0}(\overline{\Omega})$ is continuous,  we have
 $$\max_{x\in\Omega}|u(x)|\leq c_0\max\big\{(p^{+} r)^{\frac{1}{p^+}},(p^+ r)^{\frac{1}{p^-}}\big\}=c_0\gamma_r,
 \
 \hbox{for all}
 \
 u\in X,
 \
 \hbox{such that}
 \
 J(u)\leq r.
 $$
 Therefore $$\sup_{J(u)\leq r}\Phi(u)\leq\displaystyle\int_{\Omega}\sup_{|t|\leq c_0\gamma_r}F(x,t)dx.$$
 By the definitions of $\alpha_r$ and $\beta_h$ in Theorem \ref{principal}, we obtain $$\frac{\sup_{J(u)\leq r}\Phi(u)}{r}\leq \alpha_r<\beta_h\leq \frac{\Phi(\overline{v})}{J(\overline{v})}.$$
 Hence the first condition of Theorem \ref{bonano} has been verified.
 
 Next, we shall prove that for each $\lambda>0$, the energy functional $J-\lambda\Phi$ is coercive. For any $q\in C_+(\overline{\Omega})$, we have $|u(x)|^{q(x)}\leq |u(x)|^{q^+}+|u(x)|^{q^-}.$
 Condition $(H_5),$ Remark~\ref{inj}, and the above inequality imply
 \begin{eqnarray*}
 \Phi(u)&\leq&|\xi(x)|_{L^1(\Omega)}\|u\|_{\infty}+\frac{\alpha}{q^-}\int_{\Omega}(|u|^{q^+}+|u|^{q^-})dx\\
 &\leq& |\xi(x)|_{L^1(\Omega)}\|u\|_{\infty}+\frac{\alpha}{q^-}\big(\|u\|_{+\infty}^{q^-}+(\|u\|_{+\infty}^{q^+}\big)|\Omega|\\
 &\leq&  |\xi(x)|_{L^1(\Omega)}\|u\|_{\infty}(c_{0}^{q^+}\|u\|^{q^+}+c_{0}^{q^-}\|u\|^{q^-})|\Omega|.
 \end{eqnarray*}
 The above inequality and  \eqref{coe} give
 $$J(u)-\lambda\Phi(u)\geq \frac{1}{p^+} \|u\|^{p^-}-c_{0} |\xi(x)|_{L^1(\Omega)}\|u\|+\frac{\alpha}{q^-}(c_{0}^{q^+}\|u\|^{q^+}+c_{0}^{q^-}\|u\|^{q^-})|\Omega|.$$
 Since $1\leq q^-\leq q^+<p^-$, it follows that $J(u)-\lambda\Phi(u)$ is coercive.
 
 Finally, we use the fact that 
 $$\overline{\Lambda}:=
 \left(\frac{1}{\beta_h}, \frac{1}{\alpha_r}\right)\subseteq\left(\frac{J(\overline{v})}{\Phi(\overline{v})},\frac{r}{\sup_{J(u)\leq r}\Phi(u)}\right).$$
 Theorem~\ref{bonano} now ensures that for each $\lambda\in \overline{\Lambda}$,  functional  $J(u)-\lambda\Phi(u)$ admits at least three critical points in X which are weak solutions for problem \eqref{P}.
 This completes the proof of Theorem \ref{principal}.
 \end{proof}
 \begin{remark}
 Setting $r=1$ in Theorem \ref{principal}, we get
  $\gamma_r=(p^{+})^{\frac{1}{p^-}}$ and inequalities \eqref{r} become
 $$p^+<L\min\big\{(\frac{8hN}{3D^2})^{p^-},(\frac{8hN}{3D^2})^{p^+}\big\},$$  and
 $$\beta_h:=
 \frac{w(\frac{D}{2})^N\essinf_{x\in\Omega}F(x,h)}{c_3 L^{\frac{1}{p^+}}\Big[N^{\frac{1}{p^+}}\frac{8h}{3D^2} |d(x)|_{\frac{p(x)}{p(x)-1}}
 +L^{\frac{p^{+}-1}{p^+}} \max\big\{(\frac{8hN}{3D^2})^{p^-},(\frac{8hN}{3D^2})^{p^+}\big\}\Big]}
 $$
 $$> \alpha:=\int_{\Omega}\sup_{|t|\leq c_0(p^{+})^{\frac{1}{p^-}}}F(x,t)dx.
 $$
 \end{remark}
 \begin{remark}
 We note that, if $f(x,0)\neq 0$, then by Theorem \ref{principal}, there exist at least three
 nonzero solutions.
 \end{remark}
 \begin{remark}\label{ope}
 We are interested in the Leray-Lions type operators because they are quite general. Indeed, consider
 \begin{equation}\label{a}
 a(x,t)=\theta(x)|t|^{p(x)-2}t.
 \end{equation}
 where $p(x)$ satisfies condition
 \eqref{e2.2} and for $\theta\in L^{\infty}(\Omega)$  there exists $\theta_0>0$ with $\theta(x)\geq\theta_0>0,$
 for a.e. $x\in\Omega$. One can see that equation \eqref{a} satisfies conditions $(H_1)-(H_4)$ and we arrive at  operator
 $\theta(.)\Delta^2\big(|\Delta|^{p(.)-2} u\big).$
 
 Note that when $\theta\equiv 1$, we  get the well-known  p(x)-biharmonic operator $\Delta_{p(.)}^2(u)$.
 Moreover, we can make the following choice $a(x,t)=\theta(x)(1+|t|^2)^{\frac{p(x)}{p(x)-2}}t,$
 and obtain  operator $\theta(.)\Delta\Big((1+|\Delta u|^2)^{\frac{p(.)}{p(.)-2}}\Delta u\Big),$
 where $p$ and $\theta$ are as above.
 \end{remark}
  
  \section{An application}\label{s4}
   We present
   an 
   interesting application of Theorem \ref{principal}. Let $\alpha:[0,1]\rightarrow\mathbb{R}$, be a positive, bounded and measurable function. Put $\alpha_0=\essinf_{x\in  [0,1]}\alpha(x)$ and $\|\alpha\|_1=\|\alpha\|_{L^{1}([0,1])}$. Moreover, set $$k=\frac{1}{p^{+}c_3}(\frac{3}{8})^{p^{+}}\frac{\alpha_0}{\|\alpha\|_1},$$
   where $c_3$ is the constant  from Remark \ref{c3}.
   \begin{theorem}\label{dim1}
   Let $g:\mathbb{R}\rightarrow\mathbb{R}$ be a nonnegative continuous function such that
   $\lim_{|t|\rightarrow +\infty}{g(t)}{|t|^{-\nu}}=0,$
   for some $0\leq\nu<p^--1$ and $g(0)\neq 0$. Let $G(\xi)=\int_{0}^{\xi}g(t)dt,$  for all $\xi\in\mathbb{R},$ and assume that there exist
   positive 
   constants $h$ and $l$, with $l\leq 1\leq (\frac{8}{3}h)^{\frac{p^-}{p^+}}(\frac{1}{4})^{\frac{1}{p^+}}$,  such that
   $\frac{G(l)}{l^{p^+}}<k\frac{G(h)}{h^{p^+}}.$
   Then for every
    $\lambda\in\Big(\frac{(\frac{8}{3})^{p^+}h^{p^+}c_3}{\alpha_0G(h)},\frac{l^{p^+}}{p^+\|\alpha\|_1 G(l)}\Big),$  the following problem
   \begin{equation}\label{res}
   \left\{
 \begin{array}{ccc}
 (|u^{''}|^{p(x)-2}u^{''})^{''} = \lambda \alpha(x) g(u)\, \ \hbox{in}\, \  (0,1),\\
  u(0)=u(1)=0,\\
  u^{''}(1)=u^{''}(0)=0
 \end{array}
 \right.
 \end{equation}
 has at least three nontrivial solutions.
   \end{theorem}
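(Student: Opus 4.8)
The plan is to apply Theorem~\ref{principal} to the concrete data $N=1$, $\Omega=(0,1)$, $a(x,t)=|t|^{p(x)-2}t$ and $f(x,u)=\alpha(x)g(u)$, for which $\Delta\big(a(x,\Delta u)\big)=(|u''|^{p(x)-2}u'')''$, so that \eqref{res} is precisely \eqref{P} in dimension one (the embeddings of Remark~\ref{r1} are available since $p^->1>\tfrac12=\tfrac N2$). First I would record that this $a$ is the $\theta\equiv1$ instance of \eqref{a} in Remark~\ref{ope}, whence $(H_1)$--$(H_4)$ hold with $c_1=c_2=1$ and, crucially, with $d\equiv0$; here $A(x,t)=\frac{1}{p(x)}|t|^{p(x)}$, so Lemma~\ref{lm} and the compactness argument of Section~\ref{s2} supply the regularity hypotheses on $J$ and $\Phi$ demanded by Theorem~\ref{bonano}.

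Next I would verify the two requirements on $f$. Since $g$ is nonnegative, continuous and satisfies $\lim_{|t|\to\infty}g(t)|t|^{-\nu}=0$ for some $0\le\nu<p^--1$, there is $C>0$ with $g(t)\le C(1+|t|^\nu)$ for all $t$; as $\alpha$ is bounded this gives $|f(x,t)|\le\|\alpha\|_\infty C+\|\alpha\|_\infty C|t|^\nu$, which is $(H_5)$ with $\xi\equiv\|\alpha\|_\infty C\in L^1(\Omega)$, $\zeta=\|\alpha\|_\infty C$ and a constant exponent $q\in(1,p^-)$ chosen with $q-1\ge\nu$ (possible because $\nu<p^--1$, and when $\nu=0$ any $q\in(1,p^-)$ works). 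Because $g\ge0$, its primitive $G$ is nondecreasing on $[0,\infty)$, so $F(x,t)=\alpha(x)G(t)$ obeys $\essinf_{x\in\Omega}F(x,t)=\alpha_0G(t)\ge\alpha_0G(0)=0$ for all $t\in[0,h]$, which is the nonnegativity condition of Theorem~\ref{principal}.

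The heart of the argument is the constant bookkeeping. I would take the test function $\overline v$ from the proof of Theorem~\ref{principal} with the given $h$, set $r=l^{p^+}/p^+$, and compute the geometric quantities in one dimension: the volume constant is $w=2$, and the factors $L$ and $w(D/2)^N$ attached to $\overline v$ combine so that, with $d\equiv0$, the quotient $\Phi(\overline v)/J(\overline v)$ is bounded below by $\beta_h=\alpha_0G(h)\big/\big(c_3(\tfrac83h)^{p^+}\big)$, provided $\tfrac83h\ge1$ so that the maximum defining $\beta_h$ is attained at $p^+$. The standing hypothesis $l\le1\le(\tfrac83h)^{p^-/p^+}(\tfrac14)^{1/p^+}$ forces exactly this inequality $\tfrac83h\ge1$ and, simultaneously, $r<J(\overline v)$. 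Since the embedding $X\hookrightarrow C^0(\overline\Omega)$ yields $\gamma_r=l$, one finds $\alpha_r=p^+\|\alpha\|_1G(l)/l^{p^+}$, and the strict inequality $\alpha_r<\beta_h$ turns out to be algebraically identical to the assumed $\frac{G(l)}{l^{p^+}}<k\,\frac{G(h)}{h^{p^+}}$ with $k=\frac{1}{p^+c_3}(\frac38)^{p^+}\frac{\alpha_0}{\|\alpha\|_1}$. Hence $\overline\Lambda=(1/\beta_h,1/\alpha_r)$ coincides with the stated $\lambda$-interval.

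I expect the main obstacle to be precisely this last step: keeping track of every embedding and modular constant (the value of the embedding constant $c_0$, the attainment of the various $\min$ and $\max$ over $[p^-,p^+]$, and the exact power of $L$) so that the two quotients $\Phi(\overline v)/J(\overline v)$ and $r/\sup_{J(u)\le r}\Phi(u)$ collapse to the clean expressions in the statement. Once all hypotheses of Theorem~\ref{principal} are confirmed, it delivers three distinct weak solutions of \eqref{res} for every $\lambda\in\overline\Lambda$; and since $\alpha>0$ and $g(0)\ne0$ give $f(x,0)\ne0$, the Remark following Theorem~\ref{principal} guarantees that all three are nontrivial.
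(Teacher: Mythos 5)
Your overall strategy coincides with the paper's: specialize Theorem~\ref{principal} to $N=1$, $\Omega=(0,1)$, $a(x,t)=|t|^{p(x)-2}t$ and $f(x,u)=\alpha(x)g(u)$, verify $(H_1)$--$(H_5)$ and the sign condition on $F$ exactly as you do, choose $r=l^{p^+}/p^+$ so that $\gamma_r=l$ (using $l\le1$ and $p^+/p^-\ge1$), and let the geometric constants produce the stated $\lambda$-interval; the concluding appeal to $g(0)\neq0$ for nontriviality is also the paper's.

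There is, however, one genuine gap, and it is precisely the step that constitutes essentially the entire written proof in the paper: you need the embedding constant $c_0$ of Remark~\ref{inj} to satisfy $c_0\le 1$. Your identification $\alpha_r=p^+\|\alpha\|_1G(l)/l^{p^+}$ rests on $\sup_{|t|\le c_0\gamma_r}F(x,t)\le\alpha(x)G(l)$, i.e.\ on $c_0\gamma_r\le l$; since $\gamma_r=l$, this is exactly $c_0\le1$, and the abstract compact embedding of Remark~\ref{r1} gives no quantitative control on $c_0$ whatsoever. The paper obtains this bound by an explicit one-dimensional computation: writing $u(x)=\int_0^xu'(t)\,dt=\int_1^xu'(t)\,dt$ to get $|u(x)|\le\frac12\int_0^1|u'(t)|\,dt$, invoking Rolle's theorem to find $\eta$ with $u'(\eta)=0$ so that $u'(t)=\int_\eta^tu''(s)\,ds$, and then applying the H\"older inequality \eqref{ing} together with $|1|_{L^{q(x)}([0,1])}\le1$ to conclude $\|u\|_\infty\le\frac14\|u\|$, whence $c_0\le1$. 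Without this estimate the upper endpoint $l^{p^+}/(p^+\|\alpha\|_1G(l))$ of your interval is not justified. A secondary point to recheck in your ``constant bookkeeping'': for $\Omega=(0,1)$ one has $D=\frac12$, $w=2$, $L=\frac12$ and hence $\frac{8hN}{3D^2}=\frac{32h}{3}$ rather than $\frac{8h}{3}$, so your claim that $\beta_h$ collapses to $\alpha_0G(h)/\bigl(c_3(\frac83h)^{p^+}\bigr)$ does not follow as written; since the paper suppresses this computation as well, you must verify directly that the interval in the statement is contained in $\overline{\Lambda}=(1/\beta_h,1/\alpha_r)$.
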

   \begin{proof}
   For each $u\in X$ and $x\in[0,1]$, we have $$u(x)=\int_{0}^{x}u'(t)dt=\int_{1}^{x}u'(t)dt$$
   and
   \begin{eqnarray}
   |u(x)|&=& \frac{1}{2}\Big(\left|\int_{0}^{x}u'(t)dt\right|+\left|\int_{1}^{x}u'(t)dt\right|\Big)
   =\frac{1}{2}\Big[\int_{0}^{x}|u'(t)|dt+\int_{1}^{x}|u'(t)dt|\Big]
   =\frac{1}{2}\int_{0}^{1}u'(t)dt.
   \end{eqnarray}
   For each $u\in C^2([0,1])$ there exists $\eta\in(0,1)$ such that $u'(\eta)=0$. Hence, one has
   $$u'(t)=\int_{\eta}^{t}u''(s)ds, \ \mbox{for all} \ t\in (0,1).$$
 By the H\"older inequality \eqref{ing}, we can conclude that 
 $$|u'(t)|\leq\int_{0}^{1}|u''(s)|ds\leq\frac{1}{2} |u^{''}|_{L^{p(x)}([0,1])}|1|_{L^{q(x)}([0,1])},\quad 0<\eta\leq t\leq1,$$
 where $\frac{1}{p(x)}+\frac{1}{q(x)}=1$. Since $|1|_{L^{q(x)}([0,1])}\leq 1$, we see that
 $$|u(x)|\leq \frac{1}{4}\|u\|,
 \
 \hbox{ for all}
 \
 t\in[0,1],
 u\in W^{2,p(x)}([0,1])\cap W_{0}^{1,p(x)}([0,1]),
 $$ so $c_0\leq1$, where $c_0$ is the constant from Remark \ref{inj}. If we take
  $r=\frac{l^{p^+}}{p^+}$, we get $\gamma_r=l$, and a simple computation shows that all hypotheses of Theorem~\ref{principal} hold and thus it can be applied. This completes the proof of Theorem~\ref{dim1}.
 \end{proof}
 \begin{remark}
 Our Theorem \ref{dim1} generalizes Theorem 3.3 in Bonanno and Chinn\`{i} \cite{BoCh1}.
  \end{remark}  
  
\section*{Acknowledgements}
The second author was supported by the Slovenian Research Agency program P1-0292 and grants N1-0114 and N1-0083.

\hfill
 
Faculty of Computer Science and Information Technology,  Northern Border University, Rafha, Kingdom of Saudi Arabia,
 {\it khaled\_kefi@yahoo.fr} \\
 https://orcid.org/0000-0001-9277-5820   \\

Faculty of Education and Faculty of Mathematics and Physics, University of Ljubljana \&
Institute of Mathematics, Physics and Mechanics,  1000 Ljubljana, Slovenia,
{\it dusan.repovs@guest.arnes.si} \\
https://orcid.org/0000-0002-6643-1271   \\

College of Sciences,  Imam Abdulrahman Bin Faisal University,
 31441  Dammam, Kingdom of Saudi Arabia,
{\it kmsaoudi@iau.edu.sa}\\
https://orcid.org/0000-0002-7647-4814
\end{document}